\newtheorem{lem}{ \bf Lemma}[section]
\newtheorem{pro}[lem]{\bf Proposition}
\newtheorem{thm}[lem]{\bf Theorem}
\newcommand{\bF}{{\mathbf{F}}}
\newcommand{\Aut}{ {\rm Aut} }
\newcommand{\Out}{{\rm Out}}
\newcommand{\bcl}{{\operatorname{bcl}}}
\begin{document}

\title{The largest size of conjugacy class and the $p$-parts of finite groups}

\author{Guohua Qian and Yong Yang}

\address{Department of Mathematics, Changshu Institute of Technology, Changshu, JiangSu 215500, Peoples Republic of China}

\address{Department of Mathematics, Texas State University, 601 University Drive, San Marcos, TX 78666, USA}
\makeatletter
\email{ghqian2000@163.com, yang@txstate.edu}

\makeatother
\Large

\subjclass[2000]{20D20, 20D05}
\maketitle
\date{}

\begin{abstract}
Let $p$ be a prime and let $P$ be a Sylow $p$-subgroup of a finite nonabelian group $G$. Let $\bcl(G)$ be the size of the largest conjugacy class of the group $G$. We show that $|P/O_p(G)| < \bcl(G)$ if $G$ is not abelian.
\end{abstract}

\section{Introduction}

Throughout this paper, $G$ is a finite group. Let $P$ be a Sylow $p$-subgroup of a finite nonabelian group $G$, let $b(G)$ denote the largest irreducible character degree of $G$, and let $\bcl(G)$ denote the size of the largest conjugacy classes of a finite group $G$.

It is known for finite groups that $b(G)$ is connected with the structure of $G$. In ~\cite{GLUCK} Gluck proved that in all finite groups the index of the Fitting subgroup $\bF(G)$ in $G$ is bounded by a polynomial function of $b(G)$. For solvable groups, Gluck further shows that $|G : \bF(G)| \leq b(G)^{13/2}$ and conjectured that $|G : \bF(G)| \leq b(G)^2$. In ~\cite{MOWOLF}, this bound was improved to $|G:\bF(G)| \leq b(G)^3$.


When we focus on a single prime, a stronger bound can be found. In ~\cite{QIANSHI}, Qian and Shi showed that if $G$ is any finite group, then $|P/O_p(G)| < b(G)^2$ and $|P/O_p(G)| \leq b(G)$ if $P$ is abelian. Recently, the authors ~\cite{QianYang1} improved the previous result, and showed that for a finite nonabelian group $G$, $|P/O_p(G)| \leq (b(G)^p/p)^{\frac {1} {p-1}}$.

Since there is some analogy between conjugacy class sizes and character degrees of a finite group, one may ask: do there exist some similar results for conjugacy class sizes?

Inspired by the results in ~\cite{QIANSHI}, He and Shi ~\cite[Theorem A]{HESHI} showed that for any finite group $|P/O_p(G)|< \bcl(G)^2$ and $|P/O_p(G)| \leq \bcl(G)$ if $P$ is abelian. In ~\cite{LiuSong}, Liu and Song improved the previous bound by showing that $|P/O_p(G)| \leq (\bcl(G)^p/p)^{\frac {1} {p-1}}$ for a finite nonabelian group $G$. Yang ~\cite{Yanglargeconj} recently strengthened the bound to $|P/O_p(G)| \leq \bcl(G)$ when $p$ is an odd prime but not a Mersenne prime. In this paper we remove the extra conditions and show that as long as $G$ is nonabelian, then we will always have $|P/O_p(G)| < \bcl(G)$. This strengthened all the previously mentioned results in this paragraph. We also show that the bound is the best possible.

\section{Main Theorem} \label{sec:maintheorem}

While the proofs in ~\cite{HESHI,LiuSong,Yanglargeconj} mainly use the consequences of some orbit theorems of linear group actions, it seems one has to get a weaker bound or to exclude some important cases due to the limit of those orbit theorems. However, by using the consequence of the $k(GV)$ problem, we are able to achieve the best possible bound.

 The $p$-solvable case of the famous Brauer's $k(B)$ conjecture was discovered to be equivalent to the $k(GV)$ problem (Fong ~\cite{Fong}, Nagao ~\cite{Nagao}). Namely, when a finite group acts coprimely on a finite vector space $V$, the number of conjugacy classes of the group $G \ltimes V$ is less than or equal to $|V|$, the number of elements in the vector space. Thompson, Robinson, Maggard, Gluck, Schmid ~\cite{RobinsonThompson,GMRS,Schmid} and many others have contributed to this well-known problem, and the key to the solution is to study the orbit structure of the linear group actions. The following could be viewed as a generalization of a special case of the $k(GV)$ problem by Guralnick and Robinson. ~\cite{GuralnickRobinson}. The proof does not use the full strengthen of the $k(GV)$ problem, only the special case Kn\"{o}rr ~\cite{Knorr} proved a while back where the acting group is nilpotent.

\begin{lem} \label{lem2}
Let $G$ be a finite solvable group such that $G/\bF(G)$ is nilpotent, then we have $k(G) \leq |\bF(G)|$.
\end{lem}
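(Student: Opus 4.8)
The plan is to induct on $|G|$, with base case $G$ nilpotent: then $\bF(G)=G$ and the trivial bound $k(G)\le|G|=|\bF(G)|$ suffices. The engine of the induction is the elementary inequality
\[
k(G)\le|N|\,k(G/N)\qquad(N\trianglelefteq G).
\]
This holds because the full preimage in $G$ of a conjugacy class $\bar C$ of $G/N$ has exactly $|N|\,|\bar C|$ elements and is a union of $G$-conjugacy classes, each of which surjects onto $\bar C$ and hence has size at least $|\bar C|$; so at most $|N|$ classes of $G$ lie over each class of $G/N$, and summing over the $k(G/N)$ classes gives the bound.

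First I would reduce to the case that $\bF(G)$ is abelian. Write $\Phi=\Phi(G)$ and recall the standard facts $\bF(G/\Phi)=\bF(G)/\Phi$ and $\Phi(\bF(G))\le\Phi(G)$. If $\Phi\ne 1$, then $G/\Phi$ again satisfies the hypotheses, since its Fitting quotient is a quotient of the nilpotent group $G/\bF(G)$; by induction $k(G/\Phi)\le|\bF(G)|/|\Phi|$, and the displayed inequality with $N=\Phi$ yields $k(G)\le|\Phi|\,k(G/\Phi)\le|\bF(G)|$. Hence I may assume $\Phi(G)=1$. Then $\Phi(\bF(G))\le\Phi(G)=1$, so the nilpotent group $\bF(G)$ has trivial Frattini subgroup and is therefore a (direct product of elementary abelian) abelian group. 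Moreover, as $G$ is solvable, $C_G(\bF(G))\le\bF(G)$, so $H:=G/\bF(G)$ acts faithfully on the abelian group $F:=\bF(G)$.

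Now write $F=\prod_p V_p$ with $V_p=O_p(G)$, viewed as a faithful module for the nilpotent group $H$. If the action is coprime, i.e. no prime divides both $|F|$ and $|H|$, then the extension $1\to F\to G\to H\to 1$ splits by Schur--Zassenhaus and $k(G)=k(H\ltimes F)\le|F|$ by Kn\"orr's theorem on coprime actions of nilpotent groups, applied to $F$. The remaining, and main, difficulty is the non-coprime case, where for some prime $p$ both $|V_p|$ and $|H|$ are divisible by $p$, so that the $p$-part of $H$ acts on the $p$-group $V_p$ by possibly nontrivial unipotent transformations and Kn\"orr does not apply directly. The plan here is to isolate this bad part using the structural constraints already secured: if $T\trianglelefteq G$ is the preimage of $O_p(H)$ and $S\in\Syl_p(T)$, then $S\not\trianglelefteq G$ (else $S\le O_p(G)=V_p$, forcing $O_p(H)=1$), and one checks this forces $S$ to act nontrivially on the $p'$-part $O_{p'}(G)$ of $F$. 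Exploiting this entanglement, together with further applications of $k(G)\le|N|\,k(G/N)$ for carefully chosen normal subgroups $N$, I would strip off the bad primes one at a time until the action becomes genuinely coprime, where Kn\"orr finishes.

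The step I expect to be the main obstacle is precisely this control of the Fitting subgroup under the quotients: a naive choice of $N$, such as a full primary component $V_p$, can make $\bF(G/N)$ strictly larger than $\bF(G)/N$ and thereby destroy the bound (as one sees already in small coprime examples). Thus the selection of $N$ so that $\bF(G/N)=\bF(G)/N$, together with the bookkeeping of the non-coprime unipotent action, is where the real work lies; the coprime reduction and the appeal to Kn\"orr's nilpotent case of the $k(GV)$ problem are then the routine end of the argument.
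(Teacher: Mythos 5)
The paper offers no argument of its own for this lemma---it simply cites Guralnick and Robinson (\emph{On the commuting probability in finite groups}, Lemma 3)---so the only question is whether your reconstruction is complete. Your preliminary reductions are sound: the inequality $k(G)\le |N|\,k(G/N)$, the passage to $\Phi(G)=1$ via $\bF(G/\Phi(G))=\bF(G)/\Phi(G)$, the conclusion that $F=\bF(G)$ is then abelian with $C_G(F)=F$, and the appeal to Kn\"orr's nilpotent case of the $k(GV)$ problem when $\gcd(|F|,|G/F|)=1$. But the argument stops exactly where the lemma acquires content beyond Kn\"orr's theorem. The non-coprime case---some prime $p$ dividing both $|F|$ and $|G/F|$---is only described as a plan (``strip off the bad primes one at a time''), and no normal subgroup $N$ with the properties you yourself identify as necessary (in particular $\bF(G/N)=\bF(G)/N$) is ever produced. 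This case is not vacuous: already $G=C_2\times S_3$ has $\bF(G)=C_6$, nilpotent Fitting quotient $C_2$, shared prime $2$, and $k(G)=6=|\bF(G)|$, so the bound is attained and there is no slack to absorb a crude estimate. Your observations that a Sylow $p$-subgroup of the preimage of $O_p(G/F)$ cannot be normal and must act nontrivially (indeed faithfully) on $O_{p'}(F)$ are correct, but they are never converted into an induction step. As written, the proof is incomplete, and by your own admission the missing part is ``where the real work lies.''

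A secondary point: even in the coprime case, ``Kn\"orr's theorem applied to $F$'' needs a word of justification, since Kn\"orr's result concerns a faithful coprime module over a single prime field, whereas your $F$ is a product of elementary abelian groups for several primes and $H$ need not act faithfully on each primary component separately. This is repairable (for instance, apply $k(G)\le k(N)k(G/N)$ with $N=C_H(O_{p'}(F))\ltimes O_p(F)$ and induct on the number of primes dividing $|F|$), but it is a gap in the stated form. To complete the whole argument you would need either to carry out the promised construction of $N$ in the non-coprime case and verify the bound survives the quotient, or simply to follow the actual Guralnick--Robinson treatment, which is what the paper does by citation.
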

\begin{proof}
This is ~\cite[Lemma 3]{GuralnickRobinson}.
\end{proof}

\begin{lem} \label{lem3}
Let $G$ be a finite nilpotent group that acts faithfully and coprimely on an abelian group $V$, and we consider the semidirect product $G \ltimes V$, then the largest conjugacy class size in $G \ltimes V$ is of size greater than $|G|$.
\end{lem}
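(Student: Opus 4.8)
The plan is to control the number $k(H)$ of conjugacy classes of $H := G\ltimes V$ by means of Lemma~\ref{lem2}, and then to turn an upper bound on $k(H)$ into a lower bound on the largest class size $\bcl(H)$ via the class equation. Throughout I assume $G\neq 1$, which is the only case of interest. The two ingredients I need are: first, that $\bF(H)=V$, so that Lemma~\ref{lem2} yields the sharp bound $k(H)\le |V|$; and second, an averaging argument that promotes the resulting inequality $\bcl(H)\ge |G|$ to a strict one.

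First I would identify the Fitting subgroup. Since $V$ is an abelian normal subgroup of $H$, certainly $V\le \bF(H)$, and since $H/V\cong G$ is nilpotent, $H$ is solvable with $H/\bF(H)$ a quotient of the nilpotent group $G$. The key structural point is that $\bF(H)=V$. Faithfulness gives $C_G(V)=1$, and a direct computation (using that $V$ is abelian) shows $C_H(V)=V$. Now write $\pi=\pi(V)$ for the set of primes dividing $|V|$; coprimality means $G$, and hence $H/V$, is a $\pi'$-group, so the Hall $\pi$-subgroup of the nilpotent group $\bF(H)$ is exactly $V$. The complementary Hall $\pi'$-subgroup $K=O_{\pi'}(\bF(H))$ is characteristic in $\bF(H)$, hence normal in $H$, and being a Hall $\pi'$-subgroup of a nilpotent group it centralizes the Hall $\pi$-subgroup $V$; thus $K\le C_H(V)=V$, which forces $K=1$ by coprimality. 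Therefore $\bF(H)=V$, and Lemma~\ref{lem2} applies to give $k(H)\le |\bF(H)|=|V|$.

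Finally I would combine this with the class equation. Since the $k(H)$ conjugacy classes of $H$ partition $H$, the largest one has size at least the average, so $\bcl(H)\ge |H|/k(H) = |G|\,|V|/k(H)\ge |G|$. To see the inequality is strict, suppose instead $\bcl(H)\le |G|$. Then every class has size at most $|G|$, and summing over all $k(H)\le |V|$ classes gives $|G|\,|V|=|H|=\sum|\mathrm{cl}|\le k(H)\,|G|\le |V|\,|G|$, so equality must hold throughout, forcing every conjugacy class to have size exactly $|G|$. This is impossible, since the identity class is a singleton while $|G|>1$. Hence $\bcl(H)>|G|$. The one genuinely content-bearing step is the identification $\bF(H)=V$, which is precisely what makes the bound from Lemma~\ref{lem2} equal to $|V|$ rather than something larger; the remaining arguments are formal.
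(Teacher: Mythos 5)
Your proof is correct and follows essentially the same route as the paper's: apply Lemma~\ref{lem2} to get $k(G\ltimes V)\le |V|$ and then use the class equation together with the singleton identity class to force strictness. The only difference is that you explicitly verify $\bF(G\ltimes V)=V$ (which the paper leaves implicit but which is indeed needed for Lemma~\ref{lem2} to yield the bound $|V|$), and that verification is carried out correctly.
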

\begin{proof}
By Lemma ~\ref{lem2}, we know that the number of conjugacy classes in $G \ltimes V$ is less than or equal to $|V|$. Also the identity is a conjugacy class of size $1$, and the result follows.
\end{proof}

\begin{lem}  \label{lem1}
Let $G$ be a Sylow $p$-subgroup of a permutation group of degree $n$. Then $|G| \leq 2^{n-1}$.
\end{lem}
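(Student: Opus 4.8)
The plan is to reduce the statement to a bound on the order of a Sylow $p$-subgroup of the full symmetric group and then to estimate that order by Legendre's formula. A permutation group of degree $n$ is, by definition, a subgroup $H \le \Sym_n$, and $G \in \Syl_p(H)$ is then a $p$-subgroup of $\Sym_n$. By Sylow's theorem $G$ is contained in some $P \in \Syl_p(\Sym_n)$, so $|G| \le |P|$, and it suffices to prove $|P| \le 2^{n-1}$.

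First I would recall that the exact power of $p$ dividing $n! = |\Sym_n|$ is, by Legendre's formula, $\mu = \sum_{i \ge 1} \lfloor n/p^i \rfloor = (n - s_p(n))/(p-1)$, where $s_p(n)$ denotes the sum of the base-$p$ digits of $n$. Thus $|P| = p^{\mu}$. Since $n \ge 1$ forces $s_p(n) \ge 1$, this yields the clean bound $\mu \le (n-1)/(p-1)$, and hence $|G| \le |P| = p^{\mu} \le p^{(n-1)/(p-1)} = \bigl(p^{1/(p-1)}\bigr)^{n-1}$.

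It then remains to verify the elementary inequality $p^{1/(p-1)} \le 2$, equivalently $p \le 2^{p-1}$, which holds for every prime $p$, with equality exactly at $p = 2$. Combining this with the previous estimate gives $|G| \le 2^{n-1}$, as desired.

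There is no obstacle of a deep nature here; the whole content is the reduction to $\Sym_n$ together with the two standard facts above. The only point needing a little care is confirming that $s_p(n) \ge 1$ produces exactly the exponent $n-1$ (rather than a weaker $n$), since this is precisely what makes the bound tight for $p = 2$ yet still valid for all odd primes. If one prefers to avoid Legendre's formula, an equivalent route is induction on $n$: in the intransitive case one decomposes the action of $G$ into its orbits and embeds $G$ into the direct product of the induced transitive actions, while in the transitive case (where necessarily $n = p^m$) one embeds $G$ into the iterated wreath product $C_p \wr \cdots \wr C_p$ of order $p^{(n-1)/(p-1)}$; both branches again reduce to the inequality $p^{1/(p-1)} \le 2$.
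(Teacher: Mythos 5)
Your proof is correct. Note that the paper does not actually prove this lemma at all --- it simply records it as ``well known'' and cites \cite{LiuSong}, Lemma 5 --- so you are supplying an argument where the authors give only a reference. Your route is the standard one and all the steps check out: the reduction to a Sylow $p$-subgroup $P$ of $\Sym_n$ via Sylow's theorem, the identity $v_p(n!) = (n - s_p(n))/(p-1)$ from Legendre's formula, the observation $s_p(n) \geq 1$ giving the exponent $(n-1)/(p-1)$ rather than $n/(p-1)$, and finally $p \leq 2^{p-1}$ for every prime $p$ (with equality at $p=2$, which is exactly why the bound $2^{n-1}$ is sharp there). The alternative inductive argument you sketch via orbits and iterated wreath products is also the classical proof and would work equally well; nothing further is needed.
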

\begin{proof}
This result is well known (cf. ~\cite[Lemma 5]{LiuSong}).
\end{proof}

\begin{pro}  \label{simple}
Let $G$ be one of the nonabelian simple groups and $P\in {\rm Syl}_p(\Aut(G))$ for some prime $p$. Then $\bcl(G)> 2|P|$.
\end{pro}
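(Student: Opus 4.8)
This is a statement about all finite nonabelian simple groups, so the plan is to proceed through the classification. Since $G$ is simple we have $\Inn(G)\cong G$ and hence $|\Aut(G)|=|G|\cdot|\Out(G)|$, so that $|P|=|G|_p\,|\Out(G)|_p$. Writing $\bcl(G)=|G|/|C_G(g)|$ for a suitable $g$, the desired inequality $\bcl(G)>2|P|$ is equivalent to producing an element $1\neq g\in G$ whose centralizer satisfies
$$|C_G(g)|<\frac{|G|_{p'}}{2\,|\Out(G)|_p}.$$
Thus in every case I want to exhibit one conjugacy class whose size dominates $2\,|G|_p\,|\Out(G)|_p$; since $|\Out(G)|$ is small and explicitly known for each family, the work is to locate an element with a small centralizer.

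For the alternating groups $\Alt(n)$ with $n\geq 5$, note that $\Aut(\Alt(n))=\Sym(n)$ except for $n=6$, so $P$ is a Sylow $p$-subgroup of a permutation group of degree $n$ and Lemma ~\ref{lem1} gives $|P|\leq 2^{n-1}$. For $n\geq 7$ I would take $g$ to be a single cycle of length $n$ or $n-1$ (chosen to be even), whose centralizer has order $O(n)$; the resulting class has size at least about $(n-1)!/2$, which exceeds $2\cdot 2^{n-1}$. The finitely many cases $n\in\{5,6\}$ (the latter with $|\Out|=4$) are settled by direct inspection. The sporadic groups, together with the Tits group, form a finite list and are checked directly from the known class sizes and outer automorphism orders in the ATLAS.

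The groups of Lie type are the main case. Write the defining characteristic as $r$, put $q=r^f$, let $\ell$ be the Lie rank and $N$ the number of positive roots, so that $|G|_r=q^N$ and $|G|=\tfrac1d\,q^N\prod_{i=1}^{\ell}(q^{d_i}-1)$ with $\sum d_i=N+\ell$ (up to the usual modifications for twisted types). When $p=r$ I would use a regular unipotent element $u$, for which $|C_G(u)|\leq c\,q^{\ell}$ for a small constant $c$; then
$$\bcl(G)\geq\frac{|G|}{c\,q^{\ell}}=\frac{q^{N}\prod(q^{d_i}-1)}{c\,d\,q^{\ell}}>\frac{q^{N}}{c\,d\,2^{\ell}},$$
using $q^{d_i}-1>q^{d_i}/2$. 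Since $|\Out(G)|_r$ is essentially the $r$-part $f_r$ of the field-automorphism contribution (the diagonal part being prime to $r$ and the graph part tiny), and $q^{N}\geq q^{\ell}$ grows far faster than $f_r$, this yields $\bcl(G)>2\,q^{N}f_r\geq 2|P|$ once the rank or $q$ is not too small. When $p\neq r$ the factor $q^{N}=|G|_r$ lies entirely in $|G|_{p'}$, while $|G|_p$ divides $\prod(q^{d_i}-1)$ and is therefore much smaller than $q^{N}$; the same regular unipotent class then beats $2|P|$ with enormous room to spare.

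The main obstacle is the defining-characteristic case in low rank over small fields — the rank one groups $\PSL_2(q)$, $\PSU_3(q)$, ${}^2B_2(q)$, ${}^2G_2(q)$ and a handful of rank two families — where the crude estimate above loses the factor $2$ and the precise structure of $\Out(G)$ (diagonal, field and graph automorphisms, together with their $p$-parts) must be accounted for carefully. For these I would replace the regular unipotent bound by exact centralizer and maximal torus orders and verify the inequality case by case, which is where essentially all of the genuine checking lies.
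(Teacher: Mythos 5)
The paper does not actually prove this proposition from scratch: its ``proof'' is a one-line citation of \cite[Proposition 2.6]{Yanglargeconj}, together with the observation that the argument there already yields the strict inequality $\bcl(G)>2|P|$. Your outline is essentially a reconstruction of the strategy behind that cited result --- run through the classification, bound $|P|\leq|\Aut(G)|_p$ using $|\Aut(G)|=|G|\,|\Out(G)|$, and exhibit a single large class in each family --- so in spirit you are on the same track as the actual source. The alternating and sporadic cases are handled correctly (and your use of Lemma~\ref{lem1} to get $|P|\leq 2^{n-1}$ for $\Alt(n)$ is exactly the right move).

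However, as a standalone proof there is a genuine gap, and you have located it yourself: the low-rank groups of Lie type over small fields in defining characteristic, where the inequality is genuinely close to tight and where the $p$-parts of the diagonal, field, and graph automorphisms must be tracked exactly. Deferring these cases is deferring essentially all of the content of the proposition; without that verification the claim is not established. There is also a concrete error in your displayed estimate for the generic Lie-type case: from $q^{d_i}-1>q^{d_i}/2$ and $\sum d_i=N+\ell$ one gets
\[
\frac{q^{N}\prod_i(q^{d_i}-1)}{c\,d\,q^{\ell}}\;>\;\frac{q^{2N}}{c\,d\,2^{\ell}},
\]
not $q^{N}/(c\,d\,2^{\ell})$ as you wrote. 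The weaker bound you displayed cannot imply $\bcl(G)>2q^{N}f_r$, since that would require $1>2c\,d\,2^{\ell}f_r$, which is false; you need the $q^{2N}$ version, after which the conclusion does follow for $N$ not too small. Finally, note that in the case $p\neq r$ you still compare against $2|P|=2|\Aut(G)|_p$, so you must bound $|\Out(G)|_p$ (not $|\Out(G)|_r$) there; this is routine but should be said. To complete the argument you should either carry out the rank-one and small-rank checks explicitly or, as the paper does, appeal to \cite[Proposition 2.6]{Yanglargeconj}.
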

\begin{proof}
It was stated in ~\cite[Proposition 2.6]{Yanglargeconj} that $\bcl(G)\geq 2|P|$ but a close examination of the proof indeed shows that $\bcl(G)> 2|P|$.
\end{proof}

We now prove the main result.

\begin{thm} \label{thm2}
Let $p$ be a prime and let $P$ be a Sylow $p$-subgroup of a finite nonabelian group $G$. Let $\bcl(G)$ be the size of the largest conjugacy class of the group $G$. Then $|P/O_p(G)| < \bcl(G)$.
\end{thm}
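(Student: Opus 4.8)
The plan is to argue by contradiction, taking $G$ to be a counterexample of minimal order, and to exploit two soft monotonicity properties of $\bcl$ together with the quantitative inputs recorded above. The first and most useful observation is that $\bcl$ does not increase along sections: if $H \le G$ and $x \in H$, then $|x^H| = |H : C_H(x)| = |H : H \cap C_G(x)| = |HC_G(x)|/|C_G(x)| \le |G|/|C_G(x)| = |x^G|$, so $\bcl(H) \le \bcl(G)$; and if $N \trianglelefteq H$ then lifting a class representative shows $\bcl(H/N) \le \bcl(H)$. Hence $\bcl(S) \le \bcl(G)$ for every subquotient $S$ of $G$, and it suffices to locate inside $G$ a section $S$ with $\bcl(S) > |P|$. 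The first reduction is to the case $O_p(G) = 1$: writing $\bar G = G/O_p(G)$, if $\bar G$ is abelian then $P$ is normal in $G$, so $P = O_p(G)$ and $|P/O_p(G)| = 1 < \bcl(G)$; while if $\bar G$ is nonabelian then $O_p(\bar G) = 1$ and minimality together with $\bcl(\bar G) \le \bcl(G)$ finish the job. So assume $O_p(G) = 1$, whence $\bF(G)$ is a $p'$-group, and we must produce a conjugacy class of size exceeding $|P|$.

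Next I would split according to the layer $E(G)$ (the product of the components), writing $\bF^*(G) = \bF(G)E(G)$ for the generalized Fitting subgroup. Suppose first $E(G) = 1$, so $\bF^*(G) = \bF(G) =: F$ and $C_G(F) \le F$. Since $F$ is a $p'$-group, $C_P(F) \le P \cap F = 1$, so $P$ acts faithfully and coprimely on $F$, hence faithfully on $V = F/\Phi(F)$, which is abelian and a $p'$-group. Then $PF/\Phi(F) \cong P \ltimes V$ is a section of $G$ to which Lemma~\ref{lem3} applies, giving $\bcl(G) \ge \bcl(P \ltimes V) > |P|$, a contradiction. Thus in the minimal counterexample $E(G) \ne 1$.

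For the main case $E(G) \ne 1$, put $E = E(G)$, $\bar E = E/Z(E) = S_1 \times \cdots \times S_t$ (a direct product of nonabelian simple groups, as $Z(E) \le \bF(G)$ is a $p'$-group), and $P_0 = P \cap C_G(E)$. Because $E$ is perfect, any $x \in P$ centralizing $\bar E$ in fact centralizes $E$ (the map $e \mapsto [x,e]$ is then a homomorphism $E \to Z(E)$ killed by $E = E'$), so $\ker(P \to \Aut(\bar E)) = P_0$ and $Q := P/P_0$ embeds into $\Syl_p(\Aut(\bar E))$. Grouping the $S_i$ into $P$-orbits and using Proposition~\ref{simple} on each factor together with Lemma~\ref{lem1} to bound the $p$-part of the permutation (wreath) action, I would show $\bcl(\bar E) = \prod_i \bcl(S_i) > 2|Q|$, the factor $2$ coming from the strict inequality $\bcl(S_i) > 2|\Syl_p(\Aut S_i)|$. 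Separately, since $[\bF(G), E(G)] = 1$ we have $F = \bF(G) \le C_G(E)$, so $F$ and $E$ commute with $E \cap F = Z(E)$; moreover $C_G(E)$ has trivial layer and $O_p(C_G(E)) = 1$, so $\bF^*(C_G(E)) = F$ and $P_0$ acts faithfully and coprimely on a suitable abelian section $V_1$ of $F$ (obtained from $F/\Phi(F)$ after splitting off the image of $Z(E)$, which $P_0$ centralizes). Forming $W = EFP_0$ and quotienting by $Z(E)\Phi(F)$ then realizes a direct-product section $W/(Z(E)\Phi(F)) \cong \bar E \times (V_1 \rtimes P_0)$, so by Lemma~\ref{lem3}, $\bcl(G) \ge \bcl(\bar E)\cdot \bcl(V_1 \rtimes P_0) > (2|Q|)\,|P_0| = 2|P| > |P|$, the desired contradiction.

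The main obstacle is the bookkeeping in this last case: extracting a genuine direct-product section $\bar E \times (V_1 \rtimes P_0)$ of $G$. One must verify that the central product $EF$ (with amalgamated subgroup $Z(E)$) collapses to a direct product after the quotient by $Z(E)\Phi(F)$, that $P_0 = P \cap C_G(E)$ really acts faithfully and coprimely on the abelian section $V_1$ (using that $P_0$ centralizes $Z(E)$ and invoking coprime complete reducibility to discard its fixed part), and that the per-orbit estimate $\bcl(S^m) > 2|\Syl_p(\Aut(S)\wr \Sym(m))|$ multiplies correctly across several orbits and non-isomorphic factors. Once these structural points are in place, the factor of $2$ supplied by Proposition~\ref{simple} is precisely the slack needed to absorb the permutation contribution (via Lemma~\ref{lem1}) and to merge the simple and solvable parts into a single class of size greater than $|P|$.
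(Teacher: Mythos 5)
Your proof is correct and rests on exactly the same quantitative inputs as the paper --- Lemma~\ref{lem3} (the $k(GV)$ consequence), Lemma~\ref{lem1}, and the strict inequality of Proposition~\ref{simple} --- but the reduction scheme is genuinely different. The paper inducts on $|G|$: after arranging $O_p(G)=\Phi(G)=1$ it splits on whether some minimal normal subgroup is nonsolvable, replaces $G$ by $P\bF(G)$, respectively by $P(V\times C_G(V))$, and handles $C_G(V)$ by a second inductive appeal. You instead split on the layer $E(G)$ and exhibit explicit sections of $G$ itself: for $E(G)=1$ the section $P\bF(G)/\Phi(\bF(G))\cong P\ltimes V$ feeds Lemma~\ref{lem3} directly (faithfulness on $\bF(G)/\Phi(\bF(G))$ comes from coprimality, so the paper's Frattini reduction of $G$ is not needed), and for $E(G)\neq 1$ the section $E\bF(G)P_0/Z(E)\Phi(\bF(G))\cong\bar E\times(V_1\rtimes P_0)$ treats the semisimple and solvable parts in one pass, with $|P|=|P/P_0|\cdot|P_0|$ split between the two factors. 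The bookkeeping you flag does check out: the images of $E$ and of $\bF(G)P_0$ intersect in a central subgroup of $\bar E$, hence trivially; $P_0$ is faithful on $V_1$ because the image of $Z(E)$ lies in $C_V(P_0)$ while $[V,P_0]\cap C_V(P_0)=1$; and the per-type estimate $\bcl(S^m)>2\,|\Aut(S)|_p^m\,(m!)_p$ multiplies across isomorphism types since each type carries its own factor of $2$. What your route costs is the central-product care around $Z(E)$ (absent in the paper, where the minimal normal subgroup is a genuine direct product of simple groups); what it buys is that induction is needed only to reach $O_p(G)=1$ and that $C_G(E)$ is dispatched without recursion. Only the degenerate cases $P_0=1$ (or $P=1$), where Lemma~\ref{lem3} does not literally apply to a trivial acting group, deserve an explicit sentence; there $\bcl(V_1\rtimes P_0)\geq 1=|P_0|$ suffices because the other factor already supplies the strict inequality.
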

\begin{proof}
We will work by induction on $|G|$.

Note that for any subgroup or quotient group $L$ of $G$, $\bcl(G)\geq \bcl(L)$. 

Clearly we may assume that $O_p(G)=1$. Assume that $\Phi(G)>1$. Since $O_p(G)=1$, we see that $\Phi(G)$ is a $p'$-group since $\Phi(G)$ is nilpotent. Let $T$ be the pre-image of $O_p(G/\Phi(G))$ in $G$. It is clear that $T=\Phi(G)Q$ where $Q$ is a Sylow $p$-subgroup of $T$. By the Frattini's argument, we have that $G=N_G(Q) T=N_G(Q) \Phi(G) Q = N_G(Q)$, and thus $Q$ is a normal subgroup of $G$. Thus we know that $Q=1$ and $O_p(G/\Phi(G))=1$. Hence we may assume that $\Phi(G)=1$.


Assume  that all minimal normal subgroups of $G$ are solvable.
Let $F$ be the Fitting subgroup of $G$.
Since $\Phi(G)=O_p(G)=1$,
$G = F \rtimes A$ is a semidirect product of an abelian $p'$-group $F$ and a group $A$.

Clearly, $C_G(F) = C_A(F) \times F$ and $C_A(F) \unlhd  G$. Since $F$ contains all the minimal normal subgroups of $G$, we conclude that $C_A(F) = 1$, and hence, $C_G(F) = F$. Let us investigate the subgroup $K = PF$.
Since $O_p(K)$ centralizes  $F$ and hence $O_p(K) \leq C_G(F)=F$,
it follows that $O_p(K)=1$.

By induction, we may assume that $G= K = PF$.
Observe that $G = PF$ is solvable and $P$ acts faithfully on the abelian $p'$-group $F$. By Lemma ~\ref{lem3}, we know the result follows.


Now we assume that $G$ has a nonsolvable minimal normal subgroup $V$.
Set $V = V_1 \times \cdots \times V_k$, where $V_1, \ldots, V_k$ are isomorphic nonabelian simple groups.
Let us investigate the subgroup $K = P(V \times C_G(V))$.

Since $V$ is a direct product of nonabelian simple groups, $O_p(V)=1$.
This implies that $V \cap O_p(K)=1$.
Since $V$ and $O_p(K)$ are both normal in $K$,
$O_p(K)$ centralizes $V$,  so $O_p(K) \leq C_G(V)$, and hence $O_p (K) \le O_p(C_G(V))$.
Since $C_G(V)$ is normal in $G$, we see that $O_p (C_G(V)) \le O_p (G) = 1$.
Thus, we conclude that $O_p(K) = 1$.
Therefore we may assume by induction that $G=P(V \times C_G(V))$.

Set $|C_G(V)|_p=p^u$, $|G/C_G(V)|_p=p^v.$

Clearly $O_p(C_G(V))=1$.
If $C_G(V)$ is not abelian,
then by induction there exists $t \in C_G(V)$ such that
$|t^{C_G(V)}| \geq |C_G(V)|_p=p^u$. If $C_G(V)$ is abelian,
then clearly and $p^u=1$. Thus in all cases, we can find and $t \in C_G(V)$ such that  $|t^{C_G(V)}| \geq |C_G(V)|_p=p^u$.

Let $x_i \in V_i$ such that $|x_i^{V_i}| = \bcl(V_i)$ and set $x = x_1 \cdots x_k$.
Clearly $x \in V$ and $|x^{V}| = \bcl(V_1)^k = \bcl(V)$.
Note that $G/(V \times C_G(V)) \leq \Out(V) \cong \Out(V_1) \wr S_k$,
$G /C_G(V) \leq \Aut(V)\cong \Aut(V_1) \wr S_k$.
By Lemma ~\ref{lem1},
we have $p^v=|G/C_G(V)|_p \leq 2^{k-1}(|\Aut(V_1)|_p)^k$.

\bigskip

By Proposition ~\ref{simple}, we have $$\bcl(V_1)> 2|\Aut(V_1)|_p,$$  thus
$$\bcl(V)> (2|\Aut(V_1)|_p)^k
\geq |S_k|_p (|\Aut(V_1)|_p)^k \geq |G/C_G(V)|_p=p^v,$$ and then
$$\bcl(G)\geq \bcl(V\times C_G(V)) \geq \bcl(V) \cdot \bcl(C_G(V))> |G|_p,$$
and we are done.
\end{proof}

\bigskip


Remark: We provide a family of examples to show that our result is the best possible. Let $G=K \rtimes V$ where $|V|$ is a Fermat prime, $|K|=|V|-1=2^n$ and $K$ acts fixed point freely on $V$. Note that $\bcl(G)=|V|$ and $|G/O_2(G)|_2=2^n=|V|-1$.



\section{Acknowledgement} \label{sec:Acknowledgement}

The project is partially supported by the NSFC (Nos: 11671063 and 11471054), the NSF of Jiangsu Province (No.BK20161265), and the Simons Collaboration Grants (No 499532).




\begin{thebibliography}{19}





\bibitem{Fong} P. Fong, `On the characters of p-solvable groups', Trans. Amer. Math. Soc. 98 (1961), 263-284.


\bibitem{GLUCK} {D. Gluck}, `The largest irreducible character degree of a finite group', {Canad. J. Math.} 37 (3) (1985), 442-451.

\bibitem{GMRS} {D. Gluck, K. Magaard, U. Riese, and P. Schmid}, `The solution of the $k(GV)$-problem', J. Algebra 279 (2004), 694-719.

\bibitem{GuralnickRobinson} R.M. Guralnick and G.R. Robinson, `On the commuting probability in finite groups'. J. Algebra 300 (2006), no. 2, 509-528.


\bibitem{HESHI} {L. He and W. Shi}, `The largest lengths of conjugacy classes and the Sylow subgroups of finite groups', {Arch. Math.} 86 (2006), 1-6.



\bibitem{Knorr} {R. Kn\"{o}rr}, `On the number of characters in a $p$-block of a $p$-solvable group', Illinois J. Math. 28 (1984)
181-210.

\bibitem{LiuSong} {Y. Liu and X. Song}, `On the largest conjugacy class length of a finite group'. Monatsh. Math. 174 (2014), no. 2, 259-264.

\bibitem{MOWOLF} {A. Moret\'o and T.R. Wolf}, `Orbit sizes, character degrees and Sylow subgroups', {Adv. in Math.}, 184 (2004), 18-36.

\bibitem{Nagao} {H. Nagao}, `On a conjecture of Brauer for $p$-solvable groups', J. Math. Osaka City Univ. 13 (1962), 35-38.

\bibitem{QIANSHI} {G. Qian and W. Shi}, `The largest character degree and the Sylow subgroups of finite groups', {J. Algebra} 277 (2004), 165-171.

\bibitem{QianYang1} {G. Qian and Y. Yang}, `The largest character degree and the Sylow subgroups of finite groups', J. Algebra Appl. (2016), 1650066.

\bibitem {RobinsonThompson} {G.R. Robinson and J.G. Thompson}, `On Brauer's $k(B)$-problem', J. Algebra 184 (1996), 1143-1160.

\bibitem{Schmid} P. Schmid, The Solution of the k(GV) Problem, Imperial College Press, 2008.






\bibitem{Yanglargeconj} {Y. Yang}, `The largest size of conjugacy class and the Sylow $p$-subgroups of finite groups', {Arch. Math.} (2017), no.1, 9-16.


\end{thebibliography}
\end{document}